\setlist[enumerate]{leftmargin=.5in}
\setlist[itemize]{leftmargin=.5in}
\DeclareMathOperator*{\argmin}{arg\,min}
\theoremstyle{remark}
\newtheorem{remark}{Remark}
\newcommand{\ve}{{\mathbf{e}}}
\newcommand{\vs}{{\mathbf{s}}}
\newcommand{\vx}{{\mathbf{x}}}
\newcommand{\vy}{{\mathbf{y}}}
\newcommand{\vz}{{\mathbf{z}}}
\newcommand{\vA}{{\mathbf{A}}}
\newcommand{\vB}{{\mathbf{B}}}
\newcommand{\vC}{{\mathbf{C}}}
\newcommand{\vD}{{\mathbf{D}}}
\newcommand{\vE}{{\mathbf{E}}}
\newcommand{\vI}{{\mathbf{I}}}
\newcommand{\vL}{{\mathbf{L}}}
\newcommand{\vM}{{\mathbf{M}}}
\newcommand{\vP}{{\mathbf{P}}}
\newcommand{\vV}{{\mathbf{V}}}
\newcommand{\vW}{{\mathbf{W}}}
\newcommand{\vX}{{\mathbf{X}}}
\newcommand{\vY}{{\mathbf{Y}}}
\newcommand{\vone}{{\mathbf{1}}}
\newcommand{\RR}{\mathbb{R}} 
\newcommand{\proj}{\mathcal{P}}
\newcommand{\Fsf}{\mathsf{F}}
\newcommand{\Rbb}{\mathbb{R}}
\newcommand{\Dcaln}{\mathcal{D}_n}
\definecolor{DarkGreen}{RGB}{0, 90, 0} 
\title{Linear Reweighted Regularization Algorithms for Graph Matching Problem}
\author{Rongxuan Li\thanks{Department of Mathematics, The Chinese University of Hong Kong, Shatin, Hong Kong SAR, China
  (\email{1155173725@link.cuhk.edu.hk)}.}
}
\email{xuy21@rpi.edu}).}}}
\begin{document}

\maketitle

\begin{abstract}
The graph matching problem is a significant special case of the Quadratic Assignment Problem, with extensive applications in pattern recognition, computer vision, protein alignments and related fields. As the problem is NP-hard, relaxation and regularization techniques are frequently employed to improve tractability. However, most existing regularization terms are nonconvex, posing optimization challenges. In this paper, we propose a linear reweighted regularizer framework for solving the relaxed graph matching problem, preserving the convexity of the formulation. By solving a sequence of relaxed problems with the linear reweighted regularization term, one can obtain a sparse solution that, under certain conditions, theoretically aligns with the original graph matching problem's solution. Furthermore, we present a practical version of the algorithm by incorporating the projected gradient method. The proposed framework is applied to synthetic instances, demonstrating promising numerical results.
\end{abstract}

\section{Introduction}
The Quadratic Assignment Problem (QAP), a key optimization problem over permutation matrices, is one of the hardest combinatorial optimization problems and has wide applications in fields such as statistics, facility layout, and chip design~\cite{foggia,drezner2015quadratic}. Specifically, the QAP involves the assignment of a set of facilities to a set of locations in such a way that minimizes a given cost function, and can be expressed as:

\begin{equation} \label{equQAP}
   \min_{\vX \in \Pi_n} \text{tr}(\vA^\top\vX\vB\vX^\top),
\end{equation} 
where $\Pi_n$ is the set of $n$-order permutation matrices, namely, $\Pi_n = 
\{\vX \in \mathbb{R}^{n \times n} \mid \vX \mathbf{e}$ $ = \mathbf{X}^\top \mathbf{e} = \ve, \vX_{ij} \in \{0, 1\} \}$, $\vA\in\mathbb{R}^{n \times n}$, $\vB\in\mathbb{R}^{n \times n}$, $\ve\in\mathbb{R}^n$ is a vector of all ones. As the QAP Problem is NP-hard~\cite{sahni1976p}, both vertex-based methods and interior-point methods have been proposed to address this challenging problem. Vertex-based methods~\cite{ahuja2000greedy,frieze1989algorithms, taillard1991robust} update iterates directly from one permutation matrix to another. Alternatively, interior-point methods relax the nonconvex permutation matrix constraint $\Pi_n$ to a doubly stochastic matrix constraint $\Dcaln$, where $\Dcaln = \left\{\vX \in \mathbb{R}^{n \times n} \mid \vX \mathbf{e} = \mathbf{X}^\top \mathbf{e} = \ve, \vX_{ij} \ge0 \right\}$. To enhance sparsity in solutions derived from this relaxation, various regularization techniques have been introduced. Xia~\cite{xia2010efficient} proposed $L_2$ regularization approach, Huang~\cite{huang2008} proposed quartic term regularization, and Jiang et al~\cite{WZW} suggested using $L_p$ norms~\cite{lu2014iterative}. These regularization techniques are generally nonconvex, which can pose computational challenges. While adding a nonconvex term to the relaxed problem may improve solution quality, it does not necessarily lead to greater computational efficiency.

Notice that when $\vX \in \Pi_n$, 
\begin{equation}
\|\vA \vX - \vX \vB\|_{\Fsf}^2 = -2\text{tr}(\vA^\top\vX\vB\vX^\top)+ \|\vA\|_{\Fsf}^2 + \|\vB\|_{\Fsf}^2,
\end{equation}
where \(\|\vA\|_\Fsf = \sqrt{\sum_{i=1}^m \sum_{j=1}^n |a_{ij}|^2}\).
Hence, the QAP problem can be expressed in the form of a graph matching problem.
In this paper, we focus on graph matching problem: 
\begin{equation} \label{equ:grahpmatch}
   \min\limits_{\vX \in \Pi_n} \, \|\vA\vX - \vX\vB\|_{\Fsf}^2.
\end{equation} 
We relax the the permutation matrix constraint $\Pi_n$ of graph matching problem to the doubly stochastic matrix constraint $\Dcaln$ and obtain the relaxed graph matching problem (\ref{equ:relaxgraphmatch}). We first extend nonconvex regularization terms and demonstrate that the relaxed problem (\ref{equ:relaxgraphmatch}), when combined with certain concave regularization term, can achieve equivalence between the relaxed and original graph matching problem when regularization term is sufficiently large. However, incorporating a nonconvex term into the relaxed problem may enhance solution quality but does not necessarily improve computational efficiency. To better utilize the convexity of the relaxed problem which is shown in Proposition~\ref{prop}, we propose a novel algorithmic framework based on linear reweighted regularization to preserve the convexity of the relaxed problem. Inspired by the effectiveness of reweighted $L_1$ minimization in promoting sparsity~\cite{candes2008enhancing}, our approach solves a sequence of relaxed problems. We show that, under certain conditions on the initial guess, the model can achieve equivalence to the original graph matching problem. We design a solver based on the projected gradient method and incorporate line search techniques to enhance computational efficiency. Additionally, we introduce the mathematical formulation of the network alignment problem, illustrating how graph matching can address problems in fields such as social networks and computer vision. Finally, we validate our approach through numerical experiments on synthetic full-rank dense datasets, demonstrating promising results across a variety of instances.

\section{Regularization}
\subsection{Relaxation}
The graph matching problem~\eqref{equ:grahpmatch} can be relaxed to 
an optimization problem with $n~th$ order doubly stochastic matrix constraint $\Dcaln$:
\begin{equation} \label{equ:relaxgraphmatch}
   \min_{\vX \in\Dcaln} \|\vA\vX - \vX\vB\|_{\Fsf}^2.
\end{equation} 
The relaxed problem is continuous optimization problem and has several properties.

\begin{proposition}\label{prop}
Let $f(\vX)= \|\vA\vX-\vX\vB\|_{\Fsf}^2$, then $f(\vX)$ is convex on $\RR^{n\times n}$.
\end{proposition}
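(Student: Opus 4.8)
The plan is to exploit the fact that $f$ is the composition of a convex function with a linear map. First I would define the linear operator $\Lcal\colon\RR^{n\times n}\to\RR^{n\times n}$ by $\Lcal(\vX)=\vA\vX-\vX\vB$; linearity in $\vX$ is immediate since matrix multiplication is bilinear, so $\Lcal(\alpha\vX+\beta\vY)=\alpha\Lcal(\vX)+\beta\Lcal(\vY)$ for all $\alpha,\beta\in\RR$ and $\vX,\vY\in\RR^{n\times n}$. Then $f(\vX)=\|\Lcal(\vX)\|_\Fsf^2=g(\Lcal(\vX))$ with $g(\vZ)=\|\vZ\|_\Fsf^2$.

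Next I would record that $g$ is convex on $\RR^{n\times n}$: it is the square of the Frobenius norm, and since $t\mapsto t^2$ is nondecreasing and convex on $[0,\infty)$ while $\|\cdot\|_\Fsf$ is a norm (hence nonnegative and convex), the composition $g$ is convex; equivalently, $g(\vZ)=\langle\vZ,\vZ\rangle$ is a positive semidefinite quadratic form. Finally, invoking the standard fact that the composition of a convex function with a linear (more generally affine) map is convex, I conclude that $f=g\circ\Lcal$ is convex on $\RR^{n\times n}$.

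As an alternative route that also produces the gradient and Hessian used later in the paper, I would vectorize: with $\vM=\vI\otimes\vA-\vB^\top\otimes\vI$ one has $\mathrm{vec}(\Lcal(\vX))=\vM\,\mathrm{vec}(\vX)$, so $f(\vX)=\mathrm{vec}(\vX)^\top\vM^\top\vM\,\mathrm{vec}(\vX)$, a quadratic form whose Hessian $2\vM^\top\vM$ is a Gram matrix and hence positive semidefinite; positive semidefiniteness of the Hessian gives convexity directly.

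There is essentially no serious obstacle here; the only point requiring care is to state the composition rule with its correct hypotheses (linearity of $\Lcal$, together with convexity and monotonicity in the squaring step) rather than loosely appealing to "sums or compositions of convex functions are convex." If a self-contained argument is preferred to citing the composition lemma, a one-line check from the definition suffices: expand $\|\Lcal(\lambda\vX+(1-\lambda)\vY)\|_\Fsf^2$ using linearity of $\Lcal$ and then apply convexity of $\|\cdot\|_\Fsf^2$ on $\RR^{n\times n}$.
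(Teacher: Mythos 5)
Your proof is correct and follows essentially the same route as the paper: write $f = g\circ h$ with $g(\vY)=\|\vY\|_\Fsf^2$ convex and $h(\vX)=\vA\vX-\vX\vB$ linear, then apply the composition rule; the only cosmetic difference is that you certify the convexity of $g$ via norm-plus-squaring (or as a positive semidefinite quadratic form) while the paper computes $\nabla^2 g = 2\vI$. Your added vectorization with $\vM=\vI\otimes\vA-\vB^\top\otimes\vI$ is a fine alternative but not needed.
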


\begin{proof}
Define the function $g(\vY) = \|\vY\|_{\Fsf}^2$. Its Gradient and Hessian are given by
$$
\nabla g(\vY) = 2\vY, \quad \nabla^2 g(\vY) = 2\vI.
$$
Since the Hessian $\nabla^2 g(\vY)$ is a constant positive definite matrix, it follows that $g(\vY)$ is convex.
Define the function $h$ as
$$
h(\vX) = \vA  \vX - \vX  \vB.
$$
Since $g(\vY)$ is convex and $h(\vX)$ is a linear transformation, the composition $f(\vX) = g(h(\vX))$ is convex.
\end{proof}

\begin{remark}
By the Birkhoff-von Neumann theorem~\cite{birkhoff1946tres}, we know that $\Dcaln$ is the convex hull of $\Pi_n$, and the set of vertices of $\Dcaln$ is exactly $\Pi_n$. Hence, $f(\vX)$ is convex on $\Dcaln$.
\end{remark}

\begin{proposition}
$f(\vX)$ is Lipschitz continuous on $\Dcaln$ one possible Lipschitz constant $L$ shall be $L=2(\|\vA\|_\Fsf+\|\vB\|_\Fsf)^2$.
\end{proposition}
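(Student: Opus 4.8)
The plan is to exploit the quadratic structure $f(\vX) = \|h(\vX)\|_\Fsf^2$ with $h(\vX) = \vA\vX - \vX\vB$ linear, together with the fact that the feasible set $\Dcaln$ is bounded. Since $f$ is a genuine quadratic in $\vX$, it cannot be globally Lipschitz on all of $\RR^{n\times n}$; the stated constant can only hold because $\Dcaln$ is compact, so the argument must control $\|h(\vX)\|_\Fsf$ uniformly over $\Dcaln$. My approach is to first establish such a uniform bound and then combine it with a difference-of-squares factorization of $|f(\vX)-f(\vY)|$.

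First I would bound $\|h(\vX)\|_\Fsf$ for $\vX \in \Dcaln$. The key observation is that every doubly stochastic matrix has spectral (operator) norm at most one: by the Birkhoff--von Neumann theorem $\vX$ is a convex combination $\sum_k \lambda_k P_k$ of permutation matrices, each of which is orthogonal with $\|P_k\|_{\mathrm{op}} = 1$, so $\|\vX\|_{\mathrm{op}} \le \sum_k \lambda_k = 1$. Using the submultiplicative bounds $\|\vA\vX\|_\Fsf \le \|\vA\|_\Fsf\|\vX\|_{\mathrm{op}}$ and $\|\vX\vB\|_\Fsf \le \|\vX\|_{\mathrm{op}}\|\vB\|_\Fsf$ together with the triangle inequality then yields $\|h(\vX)\|_\Fsf \le \|\vA\|_\Fsf + \|\vB\|_\Fsf =: R$ for all $\vX \in \Dcaln$.

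Next I would estimate the increment. Writing $a = \|h(\vX)\|_\Fsf$ and $b = \|h(\vY)\|_\Fsf$, the identity $|a^2-b^2| = |a-b|\,(a+b)$ and the reverse triangle inequality give $|f(\vX)-f(\vY)| \le \|h(\vX)-h(\vY)\|_\Fsf\,(a+b)$. Linearity of $h$ makes $h(\vX)-h(\vY) = \vA(\vX-\vY)-(\vX-\vY)\vB$, which I would bound by $\|h(\vX)-h(\vY)\|_\Fsf \le (\|\vA\|_{\mathrm{op}}+\|\vB\|_{\mathrm{op}})\|\vX-\vY\|_\Fsf \le R\,\|\vX-\vY\|_\Fsf$, using $\|\vA\|_{\mathrm{op}}\le\|\vA\|_\Fsf$ and likewise for $\vB$. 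Since $a+b \le 2R$ on $\Dcaln$ by the previous step, combining gives $|f(\vX)-f(\vY)| \le 2R^2\,\|\vX-\vY\|_\Fsf = 2(\|\vA\|_\Fsf+\|\vB\|_\Fsf)^2\|\vX-\vY\|_\Fsf$, which is exactly the claimed constant $L$.

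The main obstacle is the uniform bound $\|h(\vX)\|_\Fsf \le R$ on $\Dcaln$: this is the only place where compactness of the feasible set enters, and it is essential because $f$ has unbounded gradient and hence no Lipschitz constant off a bounded set. The cleanest route to it is the spectral-norm bound $\|\vX\|_{\mathrm{op}}\le 1$ for doubly stochastic $\vX$; an alternative using only $\|\vX\|_\Fsf \le \sqrt{n}$ would introduce a spurious factor of $\sqrt{n}$ and miss the stated constant, so it is worth routing every matrix-product estimate through the operator norm of the doubly stochastic factor.
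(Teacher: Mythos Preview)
Your argument is correct. The paper itself does not supply a proof of this proposition; it is stated and then used later (notably in the proof of Theorem~\ref{mainthm}) without justification. So there is no ``paper's own proof'' to compare against, and your proposal fills that gap cleanly.

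The route you take---bounding $\|h(\vX)\|_\Fsf$ on $\Dcaln$ via $\|\vX\|_{\mathrm{op}}\le 1$ for doubly stochastic $\vX$, then using $|a^2-b^2|=|a-b|(a+b)$ together with the reverse triangle inequality---is the natural one and lands exactly on the stated constant $L=2(\|\vA\|_\Fsf+\|\vB\|_\Fsf)^2$. Your remark that one must use the operator-norm bound on $\vX$ rather than the crude $\|\vX\|_\Fsf\le\sqrt{n}$ is on point: the latter would introduce an extraneous factor of $n$ and miss the claimed constant. One could alternatively proceed via the gradient, $\nabla f(\vX)=2\vA^\top(\vA\vX-\vX\vB)-2(\vA\vX-\vX\vB)\vB^\top$, and bound $\|\nabla f(\vX)\|_\Fsf$ on $\Dcaln$, but this yields the same constant and is no shorter than what you wrote.
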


By carefully choosing regularization term and adding it to~\eqref{equ:relaxgraphmatch}, one can better ensure the sparsity of the solution. More specifically, by adding $h(\vX)$ and $\lambda\ge 0$ results in:

\begin{equation} \label{equ:relaxgraphmatchregular}
   \min_{\vX\in\Dcaln} \|\vA\vX - \vX\vB\|_{\Fsf}^2+\lambda h(\vX).
\end{equation} 

\subsection{Nonconvex Sparsity Regularization Terms}
Now we briefly introduce several nonconvex sparsity regularization terms proposed in literature. Huang \cite{huang2008}
used the quartic term:
\begin{equation}
h(\vX)=\|\vX \odot (\mathbf{1} - \vX) \|_{\Fsf}^2,
\end{equation}
to construct the regularization problem, where  $\odot$ is the Hadamard product and $\mathbf{1} \in \Rbb^{n \times n}$ is the matrix of all ones. Jiang et al~\cite{WZW} proposed the $L_p$ norm term by observing that $\Pi_n$ can be equivalently characterized as $\Pi_n = \Dcaln \cap \{X \mid \|X\|_0 = n \},$ and hence use:
\begin{equation}
h(\vX)=\|\vX + \epsilon \mathbf{1}\|_p^p = \sum_{i =1}^n \sum_{j=1}^n (\vX_{ij} + \epsilon)^p,
\end{equation}
with $0< p < 1$ to continuously approximate the $\|X\|_0$. The aforementioned regularization all considered solving optimization over $\Dcaln$ with a concave regularization term. By a similar proof in Theorem~3.2 in~\cite{WZW}, it is not hard to show for any strongly concave regularization term $h(\vX)$, it holds that $h(\vX)=0$, $\forall~\vX\in\Pi_n$ and $h(\vX)>0$, $\forall~\vX\in\Dcaln/\Pi_n$. Then with a finitely large $\lambda$, the problem \eqref{equ:relaxgraphmatchregular} is equivalent to graph matching problem \eqref{equ:grahpmatch}. For example, $h(\vX)=\log\vX(\vX-\mathbf{1})$ can yield the equivalence.

However, adding nonconvex regularization term $h(\vX)$ would make \eqref{equ:relaxgraphmatchregular} nonconvex which may bring computational intractability when using iterative solvers.

\section{Linear Reweighted Regularization Algorithmic Framework}
Considering the good performance of the linear reweighted regularization in recovering sparse solutions~\cite{candes2008enhancing}, we apply it to relax the graph matching problem. 
\subsection{Linear Regularization Term}
Linear regularization term is given by:
\begin{equation}
h_{\vW}(\vX) = \sum_{i=1}^{n} \sum_{j=1}^{n} \vW_{ij} \vX_{ij},    
\end{equation}
where $\vW_{ij}$ represents a weight for  element $\vX_{ij}$ for each $(i,j)$. 
\subsection{Algorithm Description}
We present an iterative algorithm to solve the graph matching problem by using the linear reweighted regularization term. Specifically, the algorithm solves a sequence of graph matching problems, each added by a linear regularization term whose weight is determined based on the solution obtained in the previous iteration, subject to the doubly stochastic matrix constraint $\Dcaln$. With the reweighted regularization, the solution obtained at each subproblem is expected to  approach a sparse solution. The detailed Algorithmic Framework is shown in Algorithm~\ref{alg:LRRAF}. 
\begin{algorithm}
\caption{Linear Reweighted Regularization Algorithmic Framework}
\label{alg:LRRAF}
\begin{algorithmic}[1]
\small
\STATE\textbf{Input}: $\vA\in\mathbb{R}^{n \times n}$, $\vB\in\mathbb{R}^{n \times n}$, $\tau > 0$, $\epsilon_0 > 0$, and $\lambda_0 > 0$.
\STATE\textbf{Initialization}: $\vX^{(0)} \in \mathbb{R}^{n \times n}$, set $k = 0$, $\vX^{(0)} = \frac{1}{n} \vone_{n} \in \mathcal{D}_n$. 
\newline(We denote $\vone_{n}$ as $n \times n$ matrix of all ones)
\WHILE{not convergent}
    \STATE $\vW_{ij}^{(k)} = \frac{1}{\vX_{ij}^{(k)} + \epsilon_k}$
    \STATE $\vX^{(k+1)} = \argmin\limits_{\vX \in \mathcal{D}_n} f(\vX) + \lambda_k \sum_{i=1}^{n} \sum_{j=1}^{n} \vW_{ij}^{(k)} \vX_{ij}$
    \STATE Choose $\lambda_{k+1} \geq \lambda_k$, $\epsilon_{k+1} \leq \epsilon_k$
    \STATE Let $k \gets k + 1$
\ENDWHILE
\end{algorithmic}
\end{algorithm}

\subsection{Convergence}
We now justify the effectiveness of linear reweighted regularization algorithm in solving the graph matching problem by showing its local convergence property. Specifically, Theorem~\ref{mainthm} shows that when the matrix $\vX^{(k)}$ satisfies a specific criterion, the solution $\vX^{(k+1)}$ obtained in the next iteration by reweighted algorithm will be the same as the globally optimal solution of the graph matching problem. To prove Theorem~\ref{mainthm}, we first establish the following lemma.

\begin{lemma}\label{lemma}
Let $\vX, \vY \in \Dcaln$. Then
$$
\sum_{i=1}^{n}\sum_{j=1}^{n} [\vX_{ij} - \vY_{ij}]_{+} = -\sum_{i=1}^{n}\sum_{j=1}^{n} [\vX_{ij} - \vY_{ij}]_{-} = \frac{1}{2} \sum_{i=1}^{n} \sum_{j=1}^{n} |\vX_{ij} - \vY_{ij}|,
$$
where $[\vX_{ij}]_{+} = \max(\vX_{ij}, 0)$ and $[\vX_{ij}]_{-} = \min(\vX_{ij}, 0)$.
\end{lemma}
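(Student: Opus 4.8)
The plan is to exploit the fact that both $\vX$ and $\vY$ are doubly stochastic, so their row sums (equivalently, the sum of all their entries) coincide. Concretely, since $\vX,\vY\in\Dcaln$ we have $\sum_{i,j}\vX_{ij} = \sum_{i,j}\vY_{ij} = n$, and therefore $\sum_{i,j}(\vX_{ij}-\vY_{ij}) = 0$.

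The next step is to split the double sum over the sign of each entry-wise difference. For any real number $t$ one has the identities $t = [t]_+ + [t]_-$ and $|t| = [t]_+ - [t]_-$, where $[t]_+ = \max(t,0)$ and $[t]_- = \min(t,0)$. Applying the first identity termwise to $t_{ij} = \vX_{ij}-\vY_{ij}$ and summing gives
$$
0 \;=\; \sum_{i=1}^n\sum_{j=1}^n (\vX_{ij}-\vY_{ij}) \;=\; \sum_{i=1}^n\sum_{j=1}^n [\vX_{ij}-\vY_{ij}]_+ \;+\; \sum_{i=1}^n\sum_{j=1}^n [\vX_{ij}-\vY_{ij}]_-,
$$
which immediately yields $\sum_{i,j}[\vX_{ij}-\vY_{ij}]_+ = -\sum_{i,j}[\vX_{ij}-\vY_{ij}]_-$, the first claimed equality.

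Finally, I would add the two representations: summing the identity $|t_{ij}| = [t_{ij}]_+ - [t_{ij}]_-$ over all $(i,j)$ gives $\sum_{i,j}|\vX_{ij}-\vY_{ij}| = \sum_{i,j}[\vX_{ij}-\vY_{ij}]_+ - \sum_{i,j}[\vX_{ij}-\vY_{ij}]_-$. Using the first equality to replace $-\sum_{i,j}[\vX_{ij}-\vY_{ij}]_-$ by $\sum_{i,j}[\vX_{ij}-\vY_{ij}]_+$ shows this equals $2\sum_{i,j}[\vX_{ij}-\vY_{ij}]_+$, and dividing by $2$ gives the second equality. There is no real obstacle here; the only thing to be careful about is making sure the doubly stochastic hypothesis is actually used (it is — it is exactly what forces the signed sum to vanish), and that the elementary identities for $[\cdot]_+$ and $[\cdot]_-$ are invoked cleanly. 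The whole argument is a few lines of bookkeeping.
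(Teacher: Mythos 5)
Your proposal is correct and follows essentially the same route as the paper's proof: both use the doubly stochastic constraint to force $\sum_{i,j}(\vX_{ij}-\vY_{ij})=0$, then apply the identities $t=[t]_++[t]_-$ and $|t|=[t]_+-[t]_-$ termwise to obtain the two equalities. No gaps to report.
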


\begin{proof}
Since $\vX, \vY \in \Dcaln$, we have 
$$
\sum_{i=1}^{n} \sum_{j=1}^{n} \vX_{ij} = \sum_{i=1}^{n} \sum_{j=1}^{n} \vY_{ij} = n.
$$
Hence,
$$
\sum_{i=1}^{n} \sum_{j=1}^{n} (\vX_{ij} - \vY_{ij}) = 0,
$$
and therefore by $\vX_{ij} - \vY_{ij}=[\vX_{ij} - \vY_{ij}]_{+} + [\vX_{ij} - \vY_{ij}]_{-}$, it follows that
$$
\sum_{i=1}^{n} \sum_{j=1}^{n} [\vX_{ij} - \vY_{ij}]_{+} = - \sum_{i=1}^{n} \sum_{j=1}^{n} [\vX_{ij} - \vY_{ij}]_{-}.
$$
Moreover, it holds that $|\vX_{ij} - \vY_{ij}|=[\vX_{ij} - \vY_{ij}]_{+} - [\vX_{ij} - \vY_{ij}]_{-}$, and thus
$$
\sum_{i=1}^{n} \sum_{j=1}^{n} [\vX_{ij} - \vY_{ij}]_{+} - \sum_{i=1}^{n} \sum_{j=1}^{n} [\vX_{ij} - \vY_{ij}]_{-} = \sum_{i=1}^{n} \sum_{j=1}^{n} |\vX_{ij} - \vY_{ij}|.
$$
Hence, we conclude that
$$
\sum_{i=1}^{n} \sum_{j=1}^{n} [\vX_{ij} - \vY_{ij}]_{+} = - \sum_{i=1}^{n} \sum_{j=1}^{n} [\vX_{ij} - \vY_{ij}]_{-} = \frac{1}{2} \sum_{i=1}^{n} \sum_{j=1}^{n} |\vX_{ij} - \vY_{ij}|.
$$
\end{proof}

\begin{theorem} \label{mainthm}
Let $f(\vX) = \|\vA\vX - \vX\vB\|_{\Fsf}^2$ and $\vX^* = \argmin_{\vX \in \Pi_n} f(\vX)$. If $\|\vX^{(k)} - \vX^*\|_{\Fsf} \leq a$ for some $a \in [0, \frac{1}{2})$, and $\lambda \geq \frac{2(a+\epsilon)(1-a+\epsilon)}{(1-2a)}L$, where $L$ is the Lipschitz constant of $f(\vX)$ on $\Dcaln$, then $\vX^* = \argmin_{\vX \in \Dcaln} f(\vX) + \lambda \sum_{i=1}^n\sum_{j=1}^n\left(\frac{1}{\vX_{ij}^{(k)} + \epsilon}\right)\vX_{ij}.$ 

\begin{proof}
By the $L$-Lipschitz continuity of $f$, it holds $f(\vX) \ge f(\vX^*) - L\|\vX^* - \vX\|_{\Fsf}$ for any $\vX \in \Dcaln$. Thus we have
\begin{align*}
    &f(\vX) + \lambda \sum_{i=1}^{n} \sum_{j=1}^{n} \left( \frac{1}{\vX_{ij}^{(k)} + \epsilon} \right) \vX_{ij}\\
    \geq & f(\vX^*) - L\|\vX^* - \vX\|_{\Fsf} 
    + \lambda \sum_{i=1}^{n} \sum_{j=1}^{n} \left( \frac{1}{\vX_{ij}^{(k)} + \epsilon} \right) \vX^*_{ij} + \lambda \sum_{i=1}^{n} \sum_{j=1}^{n} \left( \frac{1}{\vX_{ij}^{(k)} + \epsilon} \right) (\vX_{ij} - \vX^*_{ij}).
\end{align*}
By $\vX_{ij} - \vY_{ij}=[\vX_{ij} - \vY_{ij}]_{+} + [\vX_{ij} - \vY_{ij}]_{-}$, it follows from the inequality above that
\begin{align}\label{eq:bound2}
    &f(\vX) + \lambda \sum_{i=1}^{n} \sum_{j=1}^{n} \left( \frac{1}{\vX_{ij}^{(k)} + \epsilon} \right) \vX_{ij}\nonumber\\
    \geq & f(\vX^*) - L\|\vX^* - \vX\|_{\Fsf} 
    + \lambda \sum_{i=1}^{n} \sum_{j=1}^{n} \left( \frac{1}{\vX_{ij}^{(k)} + \epsilon} \right) \vX^*_{ij} \\
    &\quad + \lambda \sum_{i=1}^{n} \sum_{j=1}^{n} \left( \frac{1}{\vX_{ij}^{(k)}+\epsilon} \right) [\vX_{ij} - \vX^*_{ij}]_{+}  + \lambda \sum_{i=1}^{n} \sum_{j=1}^{n} \left( \frac{1}{\vX_{ij}^{(k)}+\epsilon} \right) [\vX_{ij} - \vX^*_{ij}]_{-}. \nonumber
\end{align}
Notice that since $\vX^*$ is a permutation matrix, $[\vX_{ij} - \vX^*_{ij}]_{+}$ is positive only if $\vX^*_{ij}=0$. In this case, it must hold that $\vX_{ij}^{(k)}\le a$ by the condition $\|\vX^{(k)} - \vX^*\|_{\Fsf} \leq a $ and thus $\frac{1}{\vX_{ij}^{(k)}+\epsilon}\ge \frac{1}{a+\epsilon}$. Similarly, $[\vX_{ij} - \vX^*_{ij}]_{-}$ is negative only if $\vX^*_{ij}=1$. In this case, $\vX_{ij}^{(k)}\ge 1-a$, and $\frac{1}{\vX_{ij}^{(k)}+\epsilon}\le \frac{1}{1-a+\epsilon}$. Hence,
\begin{align*}
    &\lambda \sum_{i=1}^{n} \sum_{j=1}^{n} \left( \frac{1}{\vX_{ij}^{(k)}+\epsilon} \right) [\vX_{ij} - \vX^*_{ij}]_{+}  + \lambda \sum_{i=1}^{n} \sum_{j=1}^{n} \left( \frac{1}{\vX_{ij}^{(k)}+\epsilon} \right) [\vX_{ij} - \vX^*_{ij}]_{-}\\
  \ge &  \lambda \sum_{i=1}^{n} \sum_{j=1}^{n} \left( \frac{1}{a+\epsilon} \right) [\vX_{ij} - \vX^*_{ij}]_{+} + \lambda \sum_{i=1}^{n} \sum_{j=1}^{n} \left( \frac{1}{1-a+\epsilon} \right) [\vX_{ij} - \vX^*_{ij}]_{-}\\
  = & \frac{\lambda}{2} \sum_{i=1}^{n} \sum_{j=1}^{n} \left( \frac{1}{a+\epsilon} \right) |\vX_{ij} - \vX^*_{ij}| 
    - \frac{\lambda}{2} \sum_{i=1}^{n} \sum_{j=1}^{n} \left( \frac{1}{1-a+\epsilon} \right) |\vX_{ij} - \vX^*_{ij}| \\
  = & \frac{(1-2a)\lambda}{2(a+\epsilon)(1-a+\epsilon)} \sum_{i=1}^{n} \sum_{j=1}^{n} |\vX_{ij} - \vX^*_{ij}| ,
\end{align*}
where the first equality follows from Lemma~\ref{lemma}.

Plugging the above equation into \eqref{eq:bound2} yields
\begin{align*}
    &f(\vX) + \lambda \sum_{i=1}^{n} \sum_{j=1}^{n} \left( \frac{1}{\vX_{ij}^{(k)} + \epsilon} \right) \vX_{ij} \ge f(\vX^*) - L\|\vX^* - \vX\|_{\Fsf}\\
&    
   \hspace{1cm} + \lambda \sum_{i=1}^{n} \sum_{j=1}^{n} \left( \frac{1}{\vX_{ij}^{(k)} + \epsilon} \right) \vX^*_{ij} + \frac{(1-2a)\lambda}{2(a+\epsilon)(1-a+\epsilon)} \sum_{i=1}^{n} \sum_{j=1}^{n} |\vX_{ij} - \vX^*_{ij}|\nonumber\\
\geq &f(\vX^*) - L\|\vX^* - \vX\|_{\Fsf} 
    + \lambda \sum_{i=1}^{n} \sum_{j=1}^{n} \left( \frac{1}{\vX_{ij}^{(k)} + \epsilon} \right) \vX^*_{ij} + \frac{(1-2a)\lambda}{2(a+\epsilon)(1-a+\epsilon)} \|\vX^* - \vX\|_{\Fsf} \\
    = &f(\vX^*) + \left( \frac{(1-2a)\lambda}{2(a+\epsilon)(1-a+\epsilon)} - L \right) \|\vX^* - \vX\|_{\Fsf} + \lambda \sum_{i=1}^{n} \sum_{j=1}^{n} \left( \frac{1}{\vX_{ij}^{(k)} + \epsilon} \right) \vX^*_{ij} \\
    \geq &f(\vX^*) + \lambda \sum_{i=1}^{n} \sum_{j=1}^{n} \left( \frac{1}{\vX_{ij}^{(k)} + \epsilon} \right) \vX^*_{ij},   
\end{align*} 
where the second inequality is by $\sum_{i=1}^{n} \sum_{j=1}^{n} |\vX_{ij} - \vX^*_{ij}|\ge \|\vX^* - \vX\|_{\Fsf}$ and the last inequality follows from the condition on $\lambda$. This completes the proof.
\end{proof}
\end{theorem}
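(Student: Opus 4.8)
The plan is to show that $\vX^*$ is a global minimizer over $\Dcaln$ of the regularized objective $F(\vX) = f(\vX) + \lambda \sum_{i,j} \vW_{ij}^{(k)} \vX_{ij}$, where $\vW_{ij}^{(k)} = 1/(\vX_{ij}^{(k)}+\epsilon)$, by establishing the inequality $F(\vX) \ge F(\vX^*)$ for every $\vX \in \Dcaln$. The three ingredients I would combine are: (i) the $L$-Lipschitz lower bound $f(\vX) \ge f(\vX^*) - L\|\vX - \vX^*\|_\Fsf$, valid on $\Dcaln$ by the Lipschitz proposition stated earlier; (ii) a sign decomposition of $\vX - \vX^*$ together with the $0/1$ structure of the permutation matrix $\vX^*$; and (iii) Lemma~\ref{lemma}, which converts sums of positive/negative parts of $\vX_{ij} - \vX^*_{ij}$ into $\tfrac12\sum_{i,j}|\vX_{ij}-\vX^*_{ij}|$.

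Concretely, first I would split the linear term as $\sum_{i,j}\vW^{(k)}_{ij}\vX_{ij} = \sum_{i,j}\vW^{(k)}_{ij}\vX^*_{ij} + \sum_{i,j}\vW^{(k)}_{ij}(\vX_{ij}-\vX^*_{ij})$ and write $\vX_{ij}-\vX^*_{ij} = [\vX_{ij}-\vX^*_{ij}]_+ + [\vX_{ij}-\vX^*_{ij}]_-$. The crucial step is localizing the supports of the two parts: since $\vX^*\in\Pi_n$, a term $[\vX_{ij}-\vX^*_{ij}]_+$ is nonzero only where $\vX^*_{ij}=0$, in which case the hypothesis $\|\vX^{(k)}-\vX^*\|_\Fsf\le a$ forces $\vX^{(k)}_{ij}\le a$ and hence $\vW^{(k)}_{ij}\ge 1/(a+\epsilon)$; symmetrically $[\vX_{ij}-\vX^*_{ij}]_-$ is nonzero only where $\vX^*_{ij}=1$, forcing $\vX^{(k)}_{ij}\ge 1-a$ and $\vW^{(k)}_{ij}\le 1/(1-a+\epsilon)$. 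Because the positive-part terms are $\ge 0$ and the negative-part terms are $\le 0$, these one-sided weight bounds yield $\sum_{i,j}\vW^{(k)}_{ij}(\vX_{ij}-\vX^*_{ij}) \ge \frac{1}{a+\epsilon}\sum_{i,j}[\vX_{ij}-\vX^*_{ij}]_+ + \frac{1}{1-a+\epsilon}\sum_{i,j}[\vX_{ij}-\vX^*_{ij}]_-$. Applying Lemma~\ref{lemma} to both sums collapses this to $\big(\tfrac{1}{2(a+\epsilon)}-\tfrac{1}{2(1-a+\epsilon)}\big)\sum_{i,j}|\vX_{ij}-\vX^*_{ij}| = \frac{1-2a}{2(a+\epsilon)(1-a+\epsilon)}\sum_{i,j}|\vX_{ij}-\vX^*_{ij}|$, where $a<\tfrac12$ makes the coefficient nonnegative and $\epsilon>0$ keeps the denominator positive.

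Finally I would assemble the pieces. Combining the Lipschitz bound with the estimate above and using the vectorized $\ell_1$-versus-$\ell_2$ inequality $\sum_{i,j}|\vX_{ij}-\vX^*_{ij}| \ge \|\vX-\vX^*\|_\Fsf$ gives
\[
F(\vX) \ge F(\vX^*) + \left(\frac{(1-2a)\lambda}{2(a+\epsilon)(1-a+\epsilon)} - L\right)\|\vX-\vX^*\|_\Fsf ,
\]
and the assumption $\lambda \ge \frac{2(a+\epsilon)(1-a+\epsilon)}{1-2a}L$ makes the parenthesized factor nonnegative, so $F(\vX)\ge F(\vX^*)$ for all $\vX\in\Dcaln$, which is the claim. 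I expect the main obstacle to be step (ii): using the $0/1$ structure of $\vX^*$ to pin down exactly where the positive and negative parts of $\vX-\vX^*$ can live, and converting the Frobenius-norm proximity hypothesis into the entrywise bounds $\vX^{(k)}_{ij}\le a$ and $\vX^{(k)}_{ij}\ge 1-a$; everything afterward is bookkeeping with Lemma~\ref{lemma} and elementary norm inequalities. A secondary point worth noting is that the argument as stated yields "$\vX^*$ is \emph{a} minimizer''; to match the $\argmin$ notation one would take $\lambda$ strictly above the stated threshold (making the coefficient strictly positive, hence $F(\vX)>F(\vX^*)$ for $\vX\ne\vX^*$) or otherwise invoke strict convexity of $f$ along feasible directions from $\vX^*$.
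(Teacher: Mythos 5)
Your proposal is correct and takes essentially the same route as the paper's own proof: the $L$-Lipschitz lower bound, splitting the linear term and decomposing $\vX_{ij}-\vX^*_{ij}$ into positive and negative parts localized by the $0/1$ structure of $\vX^*$ (giving the entrywise bounds $\vX^{(k)}_{ij}\le a$ and $\vX^{(k)}_{ij}\ge 1-a$), Lemma~\ref{lemma} to collapse the sums, the bound $\sum_{i,j}|\vX_{ij}-\vX^*_{ij}|\ge\|\vX-\vX^*\|_{\Fsf}$, and the condition on $\lambda$. Your closing observation that this yields ``$\vX^*$ is \emph{a} minimizer'' and that strictness would require $\lambda$ strictly above the threshold is a fair refinement the paper does not discuss, but otherwise the two arguments coincide.
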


\begin{remark}
By Theorem~\ref{mainthm}, we can conclude that if there exists $k_N > 0$ such that $\|\vX^{(k_N)} - \vX^*\|_{\Fsf} < \frac{1}{2}$ and $\lambda_{k_N} > \frac{2(a + \epsilon)(1 - a + \epsilon)}{(1 - 2a)} L$, then $\vX^{(k)}$ converges to $\vX^*$. 
\end{remark}

\section{A Practical Reweighted Algorithm for Graph Matching Problem}
Since $\vX$ update in Algorithm~\ref{alg:LRRAF} is a constraint optimization problem and cannot be explicitly expressed. We present a practical regularization algorithm in this section. Specifically, a sequence of relaxed problems in the form of~\eqref{reweightedsub} is solved by using the project gradient method:
\begin{equation} \label{reweightedsub}
\min_{\vX \in \mathcal{D}_n} F_{\lambda, \vX^{(0)},\epsilon} (\vX)= \min_{\vX \in \mathcal{D}_n} \|\vA\vX - \vX\vB\|_{\mathsf{F}}^2 + \lambda \sum_{i=1}^{n} \sum_{j=1}^{n} \left(\frac{1}{\vX_{ij}^{(0)} + \epsilon}\right) \vX_{ij}.
\end{equation}
The main algorithm is given in Algorithm~\ref{alg:main}. The algorithm with projected gradient subsolver to update $\vX_k$ is given in Algorithm~\ref{alg:PGmethod}. The Projection (onto $\Dcaln$) subsolver is given in Algorithm~\ref{alg:dualBB}.
\begin{algorithm}[h]
\caption{A Practical Reweighted Regularization Algorithm }
\label{alg:main}
\begin{algorithmic}[1]
\STATE \textbf{Input}: $\vA \in \mathbb{R}^{n \times n}$, $\vB \in \mathbb{R}^{n \times n}$, $\epsilon_0 > 0$, and $\lambda_0 > 0$
\STATE \textbf{Initialization}: Set $k = 0$, $\tau > 0$, $\vX_{-1} = \frac{1}{n} \mathbf{1_n} \in \mathcal{D}_n$
\WHILE{not convergent}
    \STATE Set $\vX_k^{(0)}=\vX_{k-1}$
    \STATE Find an approximate minimizer $\vX_{k}$ of problem \eqref{reweightedsub} with $\lambda_k, \vX_k^{(0)}, \epsilon_k$
    \STATE $\lambda_{k+1} \ge \lambda_k$, $\epsilon_{k+1} \le \epsilon_k$
    \STATE $k \gets k + 1$
\ENDWHILE
\end{algorithmic}
\end{algorithm}

\subsection{Projected Gradient Method} 
To conduct a comparable numerical experiment using reweighted regularization, we adopt a similar projected gradient approach as introduced in \cite{WZW}, applying the projected gradient method to solve the minimization problem and update $\vX_{k}$ in Algorithm~\ref{alg:main}. Specifically, at the $k$-th iteration, starting from the initial $X_k^{(0)}$, the projected gradient method for solving problem~\eqref{reweightedsub} with $\lambda_k$, $\vX_k^{(0)}$, and $\epsilon_k$ proceeds as follows:
\begin{equation}
\vX_{k}^{(i+1)} = \vX_k^{(i)} + \delta^j \vD^{(i)}, \quad \delta \in (0,1),
\end{equation}
where the search direction:
\begin{equation}
\vD^{(i)}= \proj_{\Dcaln} \big(\vX_k^{(i)} - \alpha_i \nabla F_{\lambda_k, \vX_k^{(0)}, \epsilon_k} (\vX_k^{(i)})\big) - \vX_k^{(i)}
\end{equation}
and $\proj_{\Dcaln}(\cdot)$ is the projection onto $\Dcaln$. A fast dual gradient is used for computing projection onto $\Dcaln$. The parameter $j$ is the smallest nonnegative integer satisfying the nonmonotone line search condition introduced in~\cite{zhang2004nonmonotone}:
\begin{equation}
\label{equ:nmls}
  F_{\lambda_k,\vX_{k}^{(0)},\epsilon_k}(\vX_k^{(i)} + \delta^j \vD^{(i)}) \leq C_i+ \theta \delta^j \langle \nabla F_{\lambda_k, \vX_{k}^{(0)}, \epsilon_k}(\vX_k^{(i)}), D^{(i)}\rangle, \quad  \theta \in (0,1),
\end{equation}
where the reference function value $C_{i+1}$ is updated as the convex combination of $C_i$ and $F_{\sigma_k, p, \epsilon_k}(X_k^{(i)})$:
\begin{equation}
C_{i+1} =  \frac{\eta Q_i C_i + F_{\lambda_k, \vX_{k}^{(0)}, \epsilon_k} (\vX_{k}^{(i)})}{Q_{i+1}}
\end{equation}
with $Q_{i+1} = \eta Q_i + 1, ~\eta = 0.85,~C_0 = F_{\lambda_k, \vX_{k}^{(0)},
\epsilon_k}(\vX_k^{(0)}),~Q_0 = 1.$ 

\begin{algorithm}
\caption{An Practical Algorithm with Project Gradient Method}
\label{alg:PGmethod}
\begin{algorithmic}[1]
\STATE \textbf{Initialization}: Set $k = 0$, $\tau > 0$, $\vX_{-1} = \frac{1}{n} \mathbf{1_n} \in \mathcal{D}_n$,
$\epsilon_0$, $\lambda_0 \geq 0$, 
$\theta, \delta, \gamma \in (0,1)$.
  \WHILE{$\|{\vX}_{k-1}\|_0 > n + \tau$}
  \STATE Choose $\vX_k^{(0)}=\vX_{k-1}$. Set $i=0$, $\alpha_i>0$
    \WHILE{$\|X_k^{(i)} - X_k^{(i-1)}\|_{\Fsf}/\sqrt{n}>\tau$} 
        \STATE Compute $D^{(i)}= P_{\Dcaln} \big(X_k^{(i)} - \alpha_i \nabla F_{\lambda_k, \vX_k^{(0)}, \epsilon_k} (X_k^{(i)})\big) - X_k^{(i)}.$
        \STATE Find the smallest $j$ such that  $\delta^j$ satisfies \eqref{equ:nmls}
        \STATE Set $X_{k}^{(i+1)} = X_k^{(i)} + \delta^j D_k^{(i)}$
        \STATE $i \gets i + 1$
    \ENDWHILE
    \STATE  $\epsilon_{k+1}=\max(\delta\epsilon_{k},\epsilon_{\min}),$ $\lambda_{k+1}=\min(\lambda_{k}+\gamma,\lambda_{\max})$ 
    \STATE $k \gets k + 1$
    \ENDWHILE
    \end{algorithmic}
\end{algorithm}

\subsection{Fast Dual Gradient Algorithm for Computing the Projection onto $\Dcaln$} \label{subsection:dualBB:proj}
In this subsection, we briefly summarize the fast dual gradient method introduced in~\cite{WZW} for computing the projection onto $\Dcaln$~\eqref{equ:prob:proj}.

\begin{equation}
\label{equ:prob:proj}
\proj_{\Dcaln}(\vC) = \argmin_{\vX \in \mathbb{R}^{n \times n}} \, \frac{1}{2} \|\vX - \vC\|_{\text{F}}^2 \quad \text{subject to} \quad \vX \ve = \ve, \ \vX^{\top} \ve = \ve, \ \vX \geq 0,
\end{equation}
The Lagrangian dual problem of \eqref{equ:prob:proj} is
\begin{equation} \label{equ:prob:proj:dual0}
\max_{\vy, \vz} \, \min_{\vX \geq 0} \mathcal{L} (\vX, \vy, \vz),
\end{equation}
where
\begin{equation}
\mathcal{L}(\vX, \vy, \vz) = \frac{1}{2} \|\vX - \vC\|_{\text{F}}^2 - \langle \vy, \vX \ve - \ve \rangle - \langle \vz, \vX^{\top} \ve - \ve \rangle,
\end{equation}
and $\vy, \vz \in \mathbb{R}^n$ are the Lagrange multipliers of the linear constraints $\vX \ve = \ve$ and $\vX^{\top} \ve = \ve$ respectively.

Let $\proj_{+}(\cdot)$ denote the projection onto the nonnegative orthant. The dual problem \eqref{equ:prob:proj:dual0} can then be equivalently rewritten as:
\begin{equation}
\label{equ:prob:proj:dual}
\min_{\vy, \vz} \, \theta(\vy, \vz) := \frac{1}{2} \|\proj_{+} \left( \vC + \vy \ve^{\top} + \ve \vz^{\top} \right)\|_{\text{F}}^2 - \langle \vy + \vz, \ve \rangle.
\end{equation}
The derivative of $\theta(\vy, \vz)$ can be written as:
\begin{equation}
\nabla \theta(\vy, \vz) = \left[ \begin{array}{l} 
\proj_{+} \left( \vC + \vy \ve^{\top} + \ve \vz^{\top} \right) \ve - \ve \\[4pt] 
\proj_{+} \left( \vC + \vy \ve^{\top} + \ve \vz^{\top} \right)^{\top} \ve - \ve 
\end{array} \right].
\end{equation}
The gradient method using the Block-Broyden step sizes to solve problem \eqref{equ:prob:proj:dual} is outlined in~\ref{alg:dualBB}. 
After obtaining the optimal solution $\vy^*$ and $\vz^*$ of \eqref{equ:prob:proj:dual}, one can recover the projection of $\vC$ by
\begin{equation}
\proj_{\Dcaln}(\vC) = \proj_{+} \left( \vC + \vy^* \ve^{\top} + \ve (\vz^*)^{\top} \right).
\end{equation}

\begin{algorithm}
\caption{Dual Block-Broyden Method for Projection (Dual BB)}
\label{alg:dualBB}
\begin{algorithmic}[1]
\STATE \textbf{Input:} Matrix $\vC$, tolerance $tol$
\STATE Initialize: $\vy = 0$, $\vz = 0$, $\ve = \mathbf{1}$ (all ones vector)
\STATE Set initial learning rate: $\alpha = 0.01$
\STATE Initialize previous gradients and iterates: $\nabla{\theta_\vy}_{0} = 0$, $\nabla{\theta_\vz}_{0} = 0$, $\vy_0 = \vy$, $\vz_0 = \vz$
\WHILE{{$\| \begin{bmatrix} \nabla{\theta_\vy} \\ \nabla{\theta_\vz}\end{bmatrix} \|_\Fsf > tol$}}
    \STATE Calculate $\vM = \vC + \vy \cdot \ve^\top + \ve \cdot \vz^\top$
    \STATE Calculate projection: $\vM_+ = \max(\vM, 0)$
    \STATE Compute gradients: $\nabla{\theta_\vy} = \vM_+\cdot \ve - \ve$, $\nabla{\theta_\vz} = \vM_+^\top \ve - \ve$
    \STATE Compute step sizes for $\vy$ and $\vz$: $\vs_\vy = \vy - \vy_0$, $\vs_\vz = \vz - \vz_0$
    \STATE Update previous iterates: $\vy_0 = \vy$, $\vz_0 = \vz$
    \STATE Compute gradient differences: $\delta_\vy = \nabla{\theta_\vy}-\nabla{\theta_\vy}_{0}$, $\delta_\vz = \nabla{\theta_\vz}-\nabla{\theta_\vz}_{0}$
    \STATE Compute BB step sizes: 
    $\alpha_\vy = \frac{\vs_\vy^\top \vs_\vy}{\vs_\vy^\top \delta_\vy}$, $\alpha_\vz = \frac{\vs_\vz^\top \vs_\vz}{\vs_\vz^\top \delta_\vz}$
    \STATE Set $\alpha = 0.5 \cdot (\alpha_\vy + \alpha_\vz)$
    \STATE Update iterates: $\vy = \vy - \alpha\nabla{\theta_\vy} $, $\vz = \vz - \alpha \nabla{\theta_\vz}$
    \STATE Store current gradients for next iteration: $\nabla{\theta_\vy}_{0}= \nabla{\theta_\vy}$, $\nabla{\theta_\vz}_{0} = \nabla{\theta_\vz}$
\ENDWHILE
\STATE \textbf{Output:} Optimal values $\vy^* = \vy$, $\vz^* = \vz$
\end{algorithmic}
\end{algorithm}

\section{Application}
\subsection{Network Alignment Review}
The quadratic programming formulation of a network alignment objective is given in~\cite{Ravindra2019RigidGA}. Specifically, given two undirected graphs $\vA = G(\vV_A, \vE_A)$ and $\vB = G(\vV_B, \vE_B)$, with vertex sets $\vV_A$ and $\vV_B$ of size $|\vV_A|$ and $|\vV_B|$, the goal of network alignment is to find a matching $\mathcal{M}$ between the vertices using a prior knowledge matrix $\vL$ that encodes the likelihood of vertex alignments. A binary matrix $\vX$ is introduced to represent the matching, where 
$\vX_{ij} = 1$ indicates a match between vertex $i$ in $\vA$ and vertex $j$ in $\vB$. Then, the  corresponding quadratic program can be formulated as
\begin{equation}
\label{equ:network}
 \max_{\vX} \, \alpha\vL\cdot\vX + \beta \vA\cdot \vX\vB\vX^\top, 
\end{equation}
subject to the constraints,
\vspace{-0.2cm}
\begin{equation}
\quad \sum_{i=1}^{|\vV_A|} \vX_{ij} \leq 1, \quad \forall j = 1, \dots,|\vV_A|,~\vX_{ij} \in \{0, 1\},
\end{equation}
\vspace{-0.2cm}
\begin{equation}
\quad \sum_{j=1}^{|\vV_B|} \vX_{ij} \leq 1, \quad \forall i = 1, \dots, |\vV_B|,~\vX_{ij} \in \{0, 1\},
\end{equation}
where $\vA \cdot \vX\vB\vX^T = \sum_{i=1}^{|\vV_A|}\sum_{j=1}^{|\vV_B|} \vA_{ij}(\vX\vB\vX^\top)_{ij}$ is called overlap of matching $\mathcal{M}$. 
Here, $\alpha$ and $\beta$ are non-negative constants that allow for tradeoff between matching weights from the prior and the number of overlapping edges.

In our numerical experiments, we consider the matrix $\vA$ and $\vB$ as adjacency or distance matrices for undirected and weighted graph with $|\vV_A|=|\vV_B|=n$, $\vX_{ij}\in\Pi_n$ and no prior knowledge is given (i.e. $\vL=0$). Notice that when $\vX \in \Pi_n$,
\begin{equation}\|\vA \vX - \vX \vB\|_{\Fsf}^2 = -2\,\vA\cdot\vX\vB\vX^T + \|\vA\|_{\Fsf}^2 + \|\vB\|_{\Fsf}^2.
\end{equation}
Hence, problem~\eqref{equ:grahpmatch} is equivalent to~\eqref{equ:network} when $\vL=0$, $|\vV_A|=|\vV_B|=n$. 

\subsection{Graph in Social Network Alignment Problem}
The social network alignment problem aims to identify individuals in two different graphs who share similar connection patterns. Given two social network graphs (e.g. Figure~\ref{fig:social}), this task can be completed by solving \eqref{equ:network}, where the matrices $\vA$ and $\vB$ represent the distance matrices for each social network graph. Specifically, each element in the distance matrix corresponds to the shortest distance between a pair of nodes in the graph.
The distance matrix for a graph can be computed using the Breadth-First Search (BFS) algorithm~\cite{bundy1984breadth}. This algorithm operates by growing a tree from a given node, expanding outward while incrementing the hop count at each expansion step and nodes that have already been visited are ignored.

\subsection{Graph in Shape correspondence problem}
Given two manifolds $\mathcal{M}_1$ and $\mathcal{M}_2$ sampled by point clouds $P_1=\{x_i\}_{i=1}^n$ and $P_2 = \{ y_i \}_{i=1}^n$, the task of dense shape correspondence is to find a point-to-point map between $P_1$ and $P_2$. The task can be achieved by solving graph matching problem \eqref{equ:grahpmatch} where $\vA \in \mathbb{R}^{n \times n}$ and $\vB \in \mathbb{R}^{n \times n}$ are two pairwise descriptors, such as geodesic distances, between points in $P_1$ and $P_2$ (e.g. Figure~\ref{fig:shape}). 

From a more practical view, the surface $\mathcal{M}$ can be discretized using a triangular mesh $T=\{\tau_l\}_{i=1}^n$ with edges $E = \{e_{ij}\}$, and vertices of the mesh are denoted by $V=\{\vx_i\}^{n}_{i=1}$. For each edge $e_{ij}$ connecting vertices $p_i$ and $p_j$, the angles opposite to the edge are defined as $\alpha_{ij}$ and $\beta_{ij}$. The stiffness matrix $\vA_s$ is given by~\cite{reddy1993introduction,reuter2009discrete}:
\begin{equation}
\vA_{s_{ij}} = 
\begin{cases} 
-2 \left( \cot \alpha_{ij} + \cot \beta_{ij} \right) & \text{if } i \sim j \\
\sum_{k\sim i}\vA_s(i,j) & \text{if } i = j,
\end{cases}
\end{equation}
where $i \sim j$ indicates that $i$ and $j$ are connected by an edge. For more information on sparse pairwise descriptors, we refer the readers to~\cite{xiang2020efficient}.

\begin{figure}[h!]
\begin{center}
\begin{minipage}{0.43\textwidth}
\centering
\includegraphics[width=\textwidth]{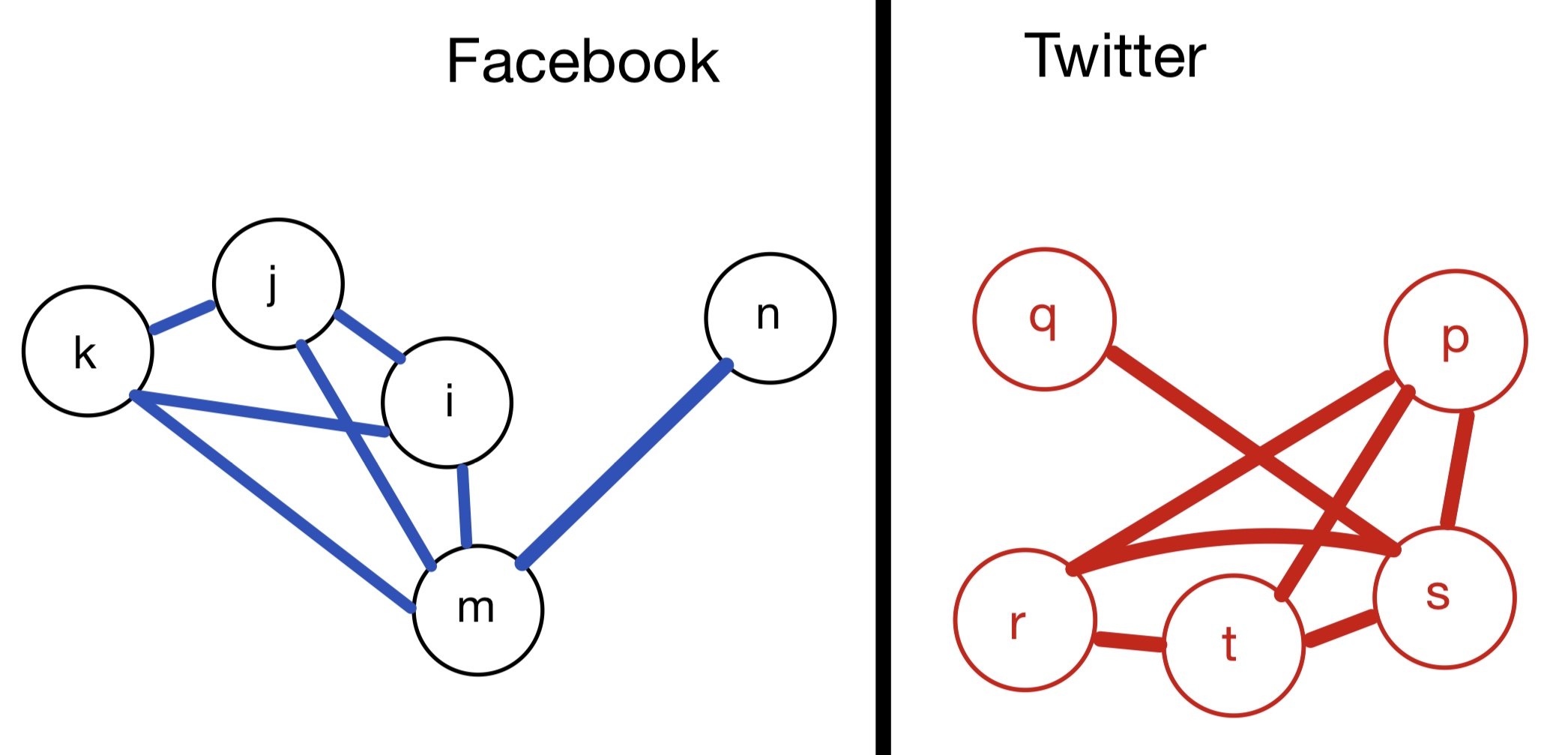}
\caption{An illustration of the graph matching problem in social network alignment. The two graphs represent connections between individuals on different platforms. Facebook (left) and Twitter (right). The goal is to identify corresponding nodes (e.g. individuals) between the two graphs based on their structural connectivity.}
\label{fig:social}
\end{minipage}%
\hspace{0.05\textwidth} 
\begin{minipage}{0.43\textwidth}
\centering
\includegraphics[width=\textwidth]{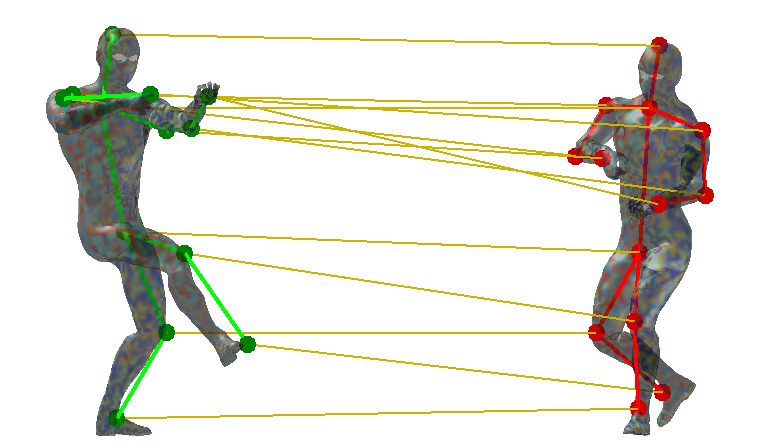}
\vspace{5pt} 
\caption{The body matching result visualization. Matrices $\vA$ and $\vB$ represent the pairwise geodesic distances between joints in two individuals. The visualization is obtained by solving relaxed graph matching problem with linear reweighted regularization term that we proposed.}
\label{fig:shape}
\end{minipage}
\end{center}
\end{figure}

\section{Experimental results}
\subsection{Experimental Dataset}
\label{subsec}
In our experiments, matrix $\vA$ in~\eqref{equ:grahpmatch} is obtained by first creating $n$ random 2D points using \verb|rand(n, 2)*10| in MATLAB and then each entry $\vA_{ij}$ is computed as the Euclidean distance between points $i$ and $j$. Matrix $\vB$ = $\vP*\vA*\vP'+\vC$, where $\vP$ is randomly generated permutation matrix and $\vC$ is generated by 2D points using \verb|rand(n, 2)*0.5|, and then each entry $\vC_{ij}$ is computed as the Euclidean distance between points $i$ and $j$. 
\subsection{Implementation Details}
To compare our proposed method with $L_p$ norm regularization, we implemented $L_p$-regularization-based solvers for the graph matching problem using $L_{p=0.75}$ and $L_{p=0.5}$ regularization terms as $h(\vX)$, following Algorithm 2 in~\cite{WZW}. We applied the linear reweighted regularization term using Algorithm~\ref{alg:PGmethod}, where the projection is computed using Algorithm~\ref{alg:dualBB}. The initial point for all regularization algorithms is chosen as $\vX_0 = \frac{1}{n} \boldsymbol{1_n}$, with the initial $\epsilon_0 = 1$, safeguards $\epsilon_{\min} = 10^{-3}$, and $\lambda_{\max} = 10^6$. The shrinkage parameter for updating $\lambda_k$ is set to $\gamma = 0.9$. We define $r(\vX) = \|\vA\vX - \vX\vB\|_{\Fsf}^2$, and denote the best numerical solution obtained at real time $t$ as $\vX_k$. Since only a relatively small perturbation $\vC$ is introduced into the permuted matrix $\vB$ in Section~\ref{subsec}, we assume $\vX^* = \argmin{\vX \in \Pi_n} r(\vX) = \vP$, where $\vP$ is the permutation matrix generated in Section~\ref{subsec}. The objective error at real time $t$ is defined as $|r(\vX_k) - r(\vP)|$, and the residual at real time $t$ is given by $\|\vX_k - \vP\|_{\Fsf}$. The experimental results using different regularization terms are presented in Figures~\ref{fig:obj50} and~\ref{fig:res50}.

\begin{figure}[h!]
\begin{center} 
\includegraphics[width=0.7\textwidth]{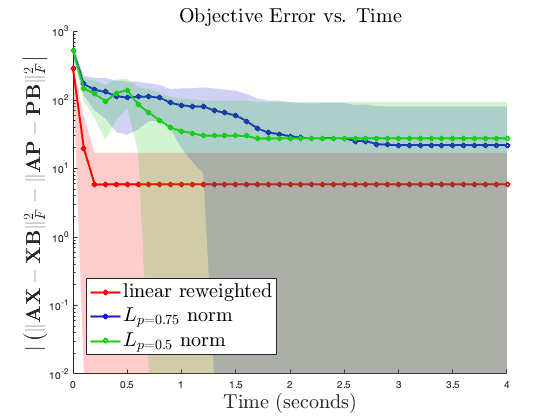}
\end{center}
\caption{Average and standard deviation results of Objective Error by linear reweighted, $L_{p=0.75}$ and $L_{p=0.5}$ regularization term, on solving 50 independently random generated instances of graph matching problem of dimension n = 50.
}\label{fig:obj50}
\end{figure}

\begin{figure}[h!]
\begin{center} 
\includegraphics[width=0.7\textwidth]{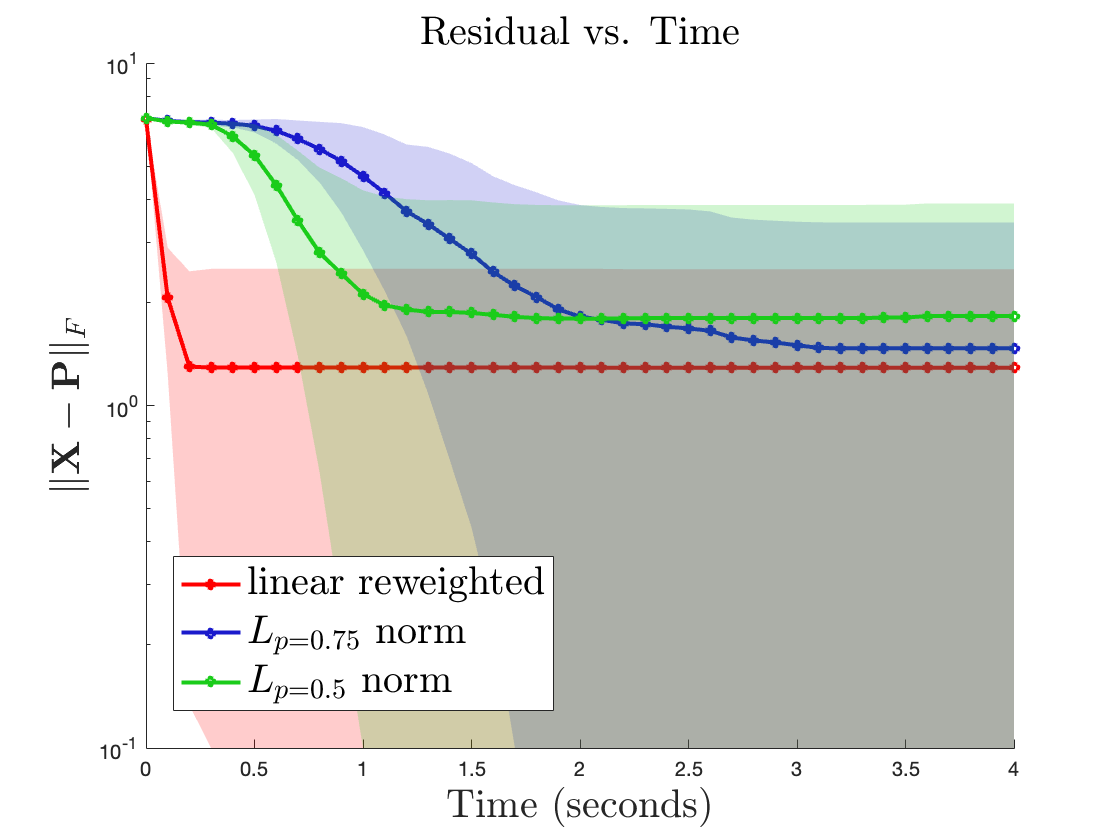}
\end{center}
\caption{Average and standard deviation results of Residual by linear reweighted, $L_{p=0.75}$ and $L_{p=0.5}$ regularization term, on solving 50 independently random generated instances of graph matching problem of dimension n = 50.
}\label{fig:res50}
\end{figure}

\section{Discussion}
In this work, we established the theory behind the linear reweighted regularization term. We provided a comprehensive justification for the use of linear reweighted regularization in solving the relaxed graph matching problem. Additionally, we conducted numerical experiments to empirically demonstrate that our reweighted regularization framework outperforms other regularization terms. In our future work, we plan to extend the graph matching problem to more applicable topics and focus on large-scale real-world data. We also aim to propose an Augmented Lagrange Multiplier (ALM)-based algorithm to ensure the effectiveness of our reweighted regularization term when facing large-scale data.

\section*{Acknowledgments}
The first author would like to thank Ms. Yueshan Ai for validating the proof of Theorem~\ref{mainthm}.
\subsection*{Conflict of interest declaration}
The author(s) has/have no competing interests to declare.
\subsection*{Data Availability}
The codes and source data files are available on GitHub at 

\href{https://github.com/rongxuan-li/graph-match/blob/main/README.md}{https://github.com/rongxuan-li/graph-match}
\bibliographystyle{siam}
\bibliography{Main}
\end{document}